\long\def\symbolfootnote[#1]#2{\begingroup\def\thefootnote{\fnsymbol{footnote}}\footnote[#1]{#2}\endgroup}
\newtheorem{theorem}{Theorem}[section]
\newtheorem{corollary}[theorem]{Corollary}
\newtheorem{lemma}[theorem]{Lemma}
\theoremstyle{remark}
\newtheorem{remark}[theorem]{Remark}
\theoremstyle{definition}
\newtheorem{example}[theorem]{Example}
\theoremstyle{proposition}
\newtheorem{proposition}[theorem]{Proposition}
\numberwithin{equation}{section}
\begin{document}
\author{Xiaohuan Mo and Linfeng Zhou}
\title[The curvatures of spherically symmetric Finsler metrics]{The curvatures of spherically symmetric Finsler metrics in $R^n$ }%\uppercase\expandafter{\romannumeral 1}.}
\date{}
\maketitle

\begin{abstract}
In this paper, we classify the spherically symmetric Berwald metrics in $\mathbb{R}^n$.  For the spherically symmetric Landsberg metrics, we prove that there do not exist any non-Berwald metrics among the regular case. The partial differential equation systems which can respectively characterize the spherically symmetric Finsler metrics with constant flag curvature and Einstein metrics of this type is also obtained. Utilizing these equations, we find an effective way to construct the non-projective, non-Randers Finsler metrics with constant flag curvature and many explicit examples are given by this method.% It is noteworthy that one of them is an $(\alpha,\beta)$ metric.%
\\

\noindent\textbf{2000 Mathematics Subject Classification:}
53B40, 53C60, 58B20.\\
\textbf{Keywords and Phrases: spherical symmetry, Berwald metric, Landsberg metric, 
constant flag curvature.}
\end{abstract}

\section{Introduction}
In the Finsler geometry, there exists a long open problem that the geometers are eager to know: whether there exists a Landsberg metric which does not need to be Berwald type. This problem is called an unicorn problem by D. Bao and M. Matsumoto declared that the search for such metrics represents the next frontier of Finsler geometry \cite{Ba}.  

In the first instance, it is believed that all Landsberg metrics have to be Berwaldian since for Randers metrics, there is no exceptional case and no other counterexamples are found for a quite long time. In 2008,  Z. I. Szab\'{o} claims that all regular Landsberg metrics are Berwald  type by using average metric \cite{Sz}.  However, there is a gap in his proof and one can consult the paper \cite{Sz1} and \cite{Ma} for the details. 

On the other hand, more and more clues indicate there might exist an unicorn metric which means non-Berwaldian Landsberg type metric. In fact, since 2002, R. Bryant has claimed that there exit such generalized metrics in two dimensions, depending on two families of functions of two variables \cite{Ba}.  In succession, G.S. Asanov discovers the $y$-local unicorn metrics from Physics in 2006\cite{As} \cite{As1}.  His examples belong to $(\alpha,\beta)$-metrics and are singular at some $y$-directions, thus it is not $y$-global \cite{Ba}. Since 2004, Z. Shen has been working on the classification of  $(\alpha,\beta)$-metrics with Landsberg type and proves that there is no unicorn among all regular $(\alpha,\beta)$-metrics. Later, motivated by Asanov's examples, Z. Shen characterizes almost regular Landsberg $(\alpha, \beta)$-metrics which generalize Asanov's results \cite{Sh}.

One may naturally question :  how about those non-$(\alpha, \beta)$-metrics? Is it  possible to search an unicorn metric among them? 

In this paper, we try to find the unicorn metrics in the class of  spherically symmetric Finsler metrics in $R^n$.  The spherically symmetric Finsler metrics are introduced by the second author in \cite{Zh} and it is not fully studied till now.  They are a class of Finsler metrics which have  a rotation symmetry and include many Finsler metrics: such as Riemannian space forms, some Randers metrics, Bryant metrics and the example of Berwald. Furthermore, they usually do not need to be $(\alpha,\beta)$-metrics.  Actually, we give the following explicit form of all almost regular Landsberg metrics of this type:

\begin{theorem}\label{lt} Let $(\Omega ,F)$ be an almost regular spherically symmetric Finsler metric in $\mathbb{R}^n$ and $n\geq 3$.  If $F$ is a Landsberg metric, then either
\begin{enumerate}
\item[(1)] $F$ is Berwaldian or \\
 \item[(2)] there exist the smooth functions $c_1, c_2, c_3$ of $r$ such that
 \[F=u\exp(\int \frac{(c_1+2c_3)r^2s+2c_2\sqrt{r^2-s^2}}{r^2+(c_1+2c_3)r^2s^2-2c_3r^4+2c_2s\sqrt{r^2-s^2}}ds).\]
 \end{enumerate}
 Here we denote $r=|x|$, $u=|y|$, $s=\frac{\langle x,y\rangle}{|y|}$.
\end{theorem}

\noindent  Notice that the metrics in the second case are singular at some tangent directions. Thus we prove that all regular Landsberg metrics of this class must be Berwaldian. 

Another fundamental topic in Finsler geometry is to classify or look for  the metrics with constant flag curvature. Till now, as far as our knowledge, a local classification theorem is merely done for Randers metrics \cite{BRS}  and square metrics \cite{Zh0} \cite{SY} and both of them are $(\alpha,\beta)$-metrics. It is still open that if there exist other type of $(\alpha,\beta)$-metrics with constant flag curvature.  If imposing the condition of local projective flatness, B. Li and Z. Shen prove that this kind of $(\alpha,\beta)$-metrics are exactly either the Randers type or the square type \cite{LS}.

Furthermore, except for the Randers metrics, the examples with constant flag curvature we know are all locally projectively flat.  Can we find a method to construct a Finsler metric on a manifold which is of scalar flag curvature, but not Randers, not locally projectively flat? This is an open problem proposed by Z. Shen \cite{Sh1}. 

In this paper, we calculate the Riemann curvature of spherically symmetric Finsler metrics and obtain three characterized partial differential equations of those metrics of constant flag curvature, a partial differential equation to characterize Einstein metrics. From them, we can observe some specific solutions and find an effective way to construct some non-Randers, non-projective Finsler metrics and all of its flag curvatures are constant.  For the convenience, two examples are listed here:
\begin{enumerate}
\item[(1)]The Finsler metric 
\[F:=u\frac{(2r+1)^2}{(4r+1)^{\frac{3}{2}}}e^{\Big(\int_{0}^s\frac{\pm4r(r+4r^2-2s^2)-4s(1+2r)\sqrt{r\big(r+4r^2-4s^2\big)}}{\Big(r+4r^2-4s^2\Big)\Big(\pm2rs+(1+2r)\sqrt{r\big(r+4r^2-4s^2\big)}\Big)}ds\Big)}\]
 defined on $\Omega=\mathbb{R}^n\setminus\{0\}$ has a vanishing flag curvature. Here $r=|x|$, $u=|y|$, $s=\frac{\langle x,y\rangle}{|y|}$.
 \item[(2)]
 The Finsler metric 
\[F^2:=u^2\Big(\frac{1}{4r+1}\pm\frac{4\sqrt{r(r+4r^2-4s^2)}s}{r(2r+1)(4r+1)^2}-\frac{4(4r^2+3r+1)}{r(2r+1)^2(4r+1)^2}s^2\Big)\]
 defined on $\Omega=\mathbb{R}^n\setminus\{0\}$ has a negative flag curvature $K=-1$. Here $r=|x|$, $u=|y|$, $s=\frac{\langle x,y\rangle}{|y|}$.
 %The Finsler metric 
%\[F^2:=\frac{u^2}{(1+4r^2)^2}\big(16r^4+8r^2-16r^2s^2+1\pm4s\sqrt{(1+4r^2)(1+4r^2-4s^2)}\big)\]
% defined on $\Omega=\mathbb{R}^n$ has a negative flag curvature $K=-1$. Here $r:=|x|$, $u:=|y|$, $s:=\frac{\langle x,y\rangle}{|y|}$.
 \end{enumerate}% Beyond our expectation, the second example actually belongs to a class of $(\alpha,\beta)$ metrics. More precisely, it can be formulated as $F=\alpha(1+\frac{\beta}{\alpha})^{\frac{1}{2}}$ where $\alpha=\sqrt{|y|^2-\frac{16r^2}{(1+4r^2)^2}\langle x,y\rangle^2}$ and $\beta=$%
 It is reasonable to looking forward to finding more examples of constant flag curvature in this class, especially those metrics with flag curvature $K=1$.

The paper is organized as follows. In section 2, we introduce some preliminary definitions of a spherically symmetric Finsler metric and derive a formula of its the geodesic spray coefficients, which is repeatedly used in latter sections. The Berwald curvature tensor of a spherically symmetric Finsler metric is computed in section 3. Moreover, we completely determine the metric function of a spherical symmetric Berwald metric. 
In section 4, the Landsberg curvature is discussed and our main Theorem \ref{lt} is proved. In the section 5,  from the constant flag curvature equations, we give the equations characterizing the spherically symmetric Finsler metrics with constant flag curvature and Einstein metrics of this type. Via these equations, a procedure of 4 steps on how to construct the example is elaborated  and some explicit examples are obtained by this method in final section 6.

\section{Preliminary} 
Let $F$ be a Finsler metric defined on  a domain $\Omega$ which is contained in $\mathbb{R}^n$. $F$ is called spherically symmetric if it is invariant under any rotations in $\mathbb{R}^n$. According to the equation of Killing fields, we know that there exists a positive function $\phi$ depending on two variables so that $F$ can be written as
$F=|y|\phi(|x|,\frac{\langle x,y\rangle}{|y|})$
where $x$ is a point in the domain $\Omega$, $y$ is a tangent vector at the point $x$ and $\langle , \rangle$, $|\cdot|$ are standard inner product and norm in Euclidean space. One can see the details in \cite{Zh}. For our convenience, denote 
$r=|x|,u=|y|,$ $v=\langle x, y\rangle,s=\frac{\langle x,y \rangle}{|y|}.$
Then $F$ has the expression $F=u\phi(r,s)$ and it does not always need to be $(\alpha, \beta)$-metrics such as the Bryant metrics.

Since the metric tensor $g_{ij}:=\frac{1}{2}\frac{\partial^2 F^2}{\partial y^i\partial y^j}$ does not involve in the derivative on the point $x$, it shares the same formula with the one of the $(\alpha, \beta)$-metrics:
\begin{eqnarray*}g_{ij}&=&\phi(\phi-s\phi_s)\delta_{ij}+(\phi_s^2+\phi\phi_{ss})x^ix^j+[s^2\phi\phi_{ss}-s(\phi-s\phi_s)\phi_s]\frac{y^i}{u}\frac{y^j}{u}\\
&&+[(\phi-s\phi_s)\phi_s-s\phi\phi_{ss}](x^i\frac{y^j}{u}+x^j\frac{y^i}{u}).
\end{eqnarray*}

A spherically symmetric Finsler metric $F=|y|\phi(|x|,\frac{\langle x,y\rangle}{|y|})$ is called an almost regular metric if it has the following properties: (i) $F(x,y)>0$ and (ii) the metric tensor $g_{ij}(x,y)>0$ for any $y\in T_x\Omega$ with $\langle x,y\rangle<|x||y|$. $F$ might be singular for $y=kx$. Such function $F$ is called an almost regular spherically symmetric Finsler metric. 

In order to compute the geodesic spray coefficients of a spherically symmetric Finsler metric $F$, let us denote 
\[g_{ij}=\rho \delta_{ij}+\rho_0x^ix^j+\rho_1(x^i\frac{y^j}{u}+x^j\frac{y^i}{u})+\rho_2\frac{y^i}{u}\frac{y^j}{u}\]
where $$\rho=\phi(\phi-s\phi_s), \rho_0=\phi_s^2+\phi\phi_{ss}, \rho_1=(\phi-s\phi_s)\phi_s-s\phi\phi_{ss}, \rho_2=s^2\phi\phi_{ss}-s(\phi-s\phi_s)\phi_s.$$ Therefore, the inverse of the metric tensor is given by
$$g^{ij}=\rho^{-1}(\delta^{ij}-\tau x^ix^j-\eta Y^iY^j)$$
where $$Y^i=\frac{y^i}{u}+\lambda x^i, \lambda=\frac{\epsilon-\delta s}{1+\delta r^2}, \eta=\frac{\mu}{1+Y^2\mu}, Y=\sqrt{1+(\lambda+\epsilon)s+\lambda\epsilon r^2}$$ and $$\epsilon=\frac{\rho_1}{\rho_2}, \delta=\frac{\rho_0-\epsilon^2\rho_2}{\rho}, \mu=\frac{\rho_2}{\rho}, \tau=\frac{\delta}{1+\delta r^2}.$$ On the other hand, by the definition of the geodesic spray coefficients, we have
\[G^i:=\frac{1}{4}g^{il}\{(F^2)_{x^ky^l}y^k-(F^2)_{x^l}\}=\frac{F_{x^k}y^k}{2F}y^i+\frac{F}{2}g^{il}\{F_{x^ky^l}y^k-F_{x^l}\}.\]
Since $F_{x^k}=u\phi_r\frac{x^k}{r}+\phi_sy^k$, one can write the first part as
\begin{equation}\label{pr1}\frac{F_{x^k}y^k}{2F}y^i=\frac{u}{2\phi}(\frac{s}{r}\phi_r+\phi_s)y^i.\end{equation}
At the same time, it can be computed that
\[F_{x^ky^l}=(u\phi_r\frac{x^k}{r}+\phi_sy^k)_{y^l}=\frac{\phi_r}{ru}x^ky^l+\frac{u}{r}\phi_{rs}(\frac{x^l}{u}-\frac{v}{u^3}y^l)x^k+\phi_{ss}(\frac{x^l}{u}-\frac{v}{u^3}y^l)y^k+\phi_s\delta^k_{\ l}.\]
Hence
\begin{equation}\label{pr2}
F_{x^ky^l}y^k-F_{x^l}=(-ux^l+sy^l)(\frac{\phi_r}{r}-\frac{s}{r}\phi_{rs}-\phi_{ss}).\end{equation}
Combining (\ref{pr1}) and (\ref{pr2}), the geodesic spray coefficients become
\[G^i=\frac{u}{2\phi}(\frac{s}{r}\phi_r+\phi_s)y^i+\frac{u\phi}{2}g^{il}(-ux^l+sy^l)(\frac{\phi_r}{r}-\frac{s}{r}\phi_{rs}-\phi_{ss}).\]
So we only need to compute
\begin{eqnarray*}
g^{il}(-ux^l+sy^l)&=&\rho^{-1}(\delta^{il}-\tau x^ix^l-\eta Y^iY^l)(-ux^l+sy^l)\\
&=&\rho^{-1}\{[-u+\tau u(r^2-s^2)+\lambda^2\eta u(r^2-s^2)]x^i+[s+\lambda\eta (r^2-s^2)]y^i\}.
\end{eqnarray*}
Plugging above equalities into $G^i$ and simplifying it, one will finally come to the formula
\[G^i=uPy^i+u^2Qx^i\]
where 
$$P=-\frac{1}{\phi}\big(s\phi+(r^2-s^2)\phi_s\big)Q+\frac{1}{2r\phi}(s\phi_r+r\phi_s)$$
and 
$$Q=\frac{1}{2r}\frac{-\phi_r+s\phi_{rs}+r\phi_{ss}}{\phi-s\phi_s+(r^2-s^2)\phi_{ss}}.$$
This formula is a little more complicated than the one of $(\alpha,\beta)$-metrics because the partial derivatives of the point $x$ are involved in. 

For a spherically symmetric Finsler metric $F$ in $\mathbb{R}^n$, it is called projective if  its geodesics are straight lines. It is easy to check that $F$ is projective if and only if in its geodesic spray coefficients $Q=0$. In \cite{Zh}, the second author gives the explicit formula of projective spherically symmetric Finsler metrics.  Moreover, in \cite {Zh1}, this type of metrics with constant flag curvature are completely classified. 

Since the metrics have a nice symmetry, the geodesic spray coefficients look clean and neat. Furthermore, here is a philosophy we will often use  in the section 4 and the section 5: once one can determine the specific $P$ and $Q$ in the geodesic spray coefficients, by a technique, one can immediately obtain the original metric $F$ by solving a linear 1-order partial differential equation system.   

\section{Berwald Curvature}
The Berwald curvature of a Finsler metric is a tensor defined in local coordinates as follows:
\[B:=B^i_{\ jkl}dx^j\otimes dx^k\otimes dx^l\otimes \frac{\partial}{\partial x^i}\]
where $B^i_{\ jkl}=\frac{\partial^3G^i}{\partial y^j \partial y^k\partial y^l }$ and $G^i$ is the geodesic spray coefficients. For a spherically symmetric Finsler metric $F=u\phi(r,s)$, we already know its geodesic spray coefficients can be written as $G^i=uPy^i+u^2Qx^i$. Plugging it into the definition of Berwald curvature, one can calculate $B^i_{\ jkl}$ given by
\begin{eqnarray*}
B^i_{\ jkl}&=&\frac{P_{ss}}{u}(\delta^i_{\ j} x^kx^l+\delta^i_{\ l}x^jx^k+\delta^i_{\ k}x^jx^l)+(\frac{P}{u}-\frac{s}{u}P_s)(\delta^i_{\ j}\delta_{kl}+\delta^i_{\ k}\delta_{jl}+\delta^i_{\ l}\delta_{jk})\\
&&-\frac{s}{u^2}P_{ss}\big(\delta^i_{\ j}(x^ky^l+x^ly^k)+\delta^i_{\ k}(x^jy^l+x^ly^j)+\delta^i_{\ l}(x^jy^k+x^ky^j)\big)\\
&&-\frac{s}{u^2}P_{ss}y^i(\delta_{jk}x^l+\delta_{jl}x^k+\delta_{kl}x^j)+(\frac{Q_s}{u}-\frac{s}{u}Q_{ss})x^i(\delta_{jk}x^l+\delta_{jl}x^k+\delta_{kl}x^j)\\
&&+(\frac{s^2}{u^3}P_{ss}+\frac{s}{u^3}P_s-\frac{P}{u^3})(\delta^i_{\ j}y^ky^l+\delta^i_{\ k}y^jy^l+\delta^i_{\ l}y^jy^k)\\
&&+(\frac{s^2}{u^3}P_{ss}+\frac{s}{u^3}P_s-\frac{P}{u^3})y^i(\delta_{jk}y^l+\delta_{jl}y^k+\delta_{kl}y^j)\\
&&+(\frac{3}{u^5}P-\frac{s^3}{u^5}P_{sss}-\frac{6s^2}{u^5}P_{ss}-\frac{3s}{u^5}P_s)y^iy^jy^ky^l\\
&&+(\frac{s^2}{u^4}P_{sss}+\frac{3s}{u^4}P_{ss})y^i(y^jy^kx^l+y^jy^lx^k+y^ky^lx^j)+\frac{P_{sss}}{u^2}y^ix^jx^kx^l\\
&&-(\frac{P_{ss}}{u^3}+\frac{s}{u^3}P_{sss})y^i(y^jx^kx^l+y^kx^jx^l+y^lx^jx^k)\\
&&+(\frac{s^2}{u^3}Q_{sss}+\frac{s}{u^3}Q_{ss}-\frac{Q_s}{u^3})x^i(x^jy^ky^l+x^ky^jy^l+x^ly^jy^k)\\
&&-\frac{s}{u^2}Q_{sss}x^i(x^jx^ly^k+x^jx^ky^l+x^kx^ly^j)+\frac{Q_{sss}}{u}x^ix^jx^kx^l\\
&&+(\frac{s^2}{u^2}Q_{ss}-\frac{s}{u^2}Q_s)x^i(\delta_{kl}y^j+\delta_{jl}y^k+\delta_{jk}y^l)\\
&&+(\frac{3s}{u^4}Q_s-\frac{3s^2}{u^4}Q_{ss}-\frac{s^3}{u^4}Q_{sss})x^iy^jy^ky^l.
\end{eqnarray*}

As we know, a Finsler metric $F$ is called Berwald metric if the Berwald curvature is zero. 
From above formula, a spherically symmetric metric $F=u\phi(r,s)$ is a Berwald metric if and only if $P$ and $Q$ in its geodesic spray coefficients must satisfy 
\begin{equation}\label{beq1}
\left\{ \begin{array}{l}
         sP_s-P=0\\
         P_{ss}=0\\
         sQ_{ss}-Q_s=0\\
         Q_{sss}=0.
          \end{array} \right.\end{equation}
From these equations, one can first solve $P$ and $Q$, then completely determine the metric function $F$. 

\begin{lemma}  \label{So}
The following 1-order non-homogenous  linear partial differential equation   
\[\frac{\partial \phi(r,s)}{\partial r}+\big(\frac{s}{r}-rc_2(r)(r^2-s^2)s\big)\frac{\partial \phi(r,s)}{\partial s}=-(\frac{1}{r}-rc_2(r)s^2)\phi\]
has the general solution given by
\[\phi=\psi(\frac{s^2}{g(r)+s^2\int 2rc_2(r)g(r)dr})e^{-\int (\frac{2}{r}-r^3c_2(r))dr}s\]
where $g(r)=e^{\int (\frac{2}{r}-2r^3c_2(r))    dr}$.
\end{lemma}

\begin{proof}
It's characteristic equation is 
\[\frac{dr}{1}=\frac{ds}{\frac{s}{r}-rc_2(r)(r^2-s^2)s}=\frac{d\phi}{-(\frac{1}{r}-rc_2(r)s^2)\phi}.\]
From $\frac{dr}{1}=\frac{ds}{\frac{s}{r}-rc_2(r)(r^2-s^2)s}$, we know that
\[\frac{d}{dr}(s^2)=(\frac{2}{r}-2c_2r^3)s^2+2c_2rs^4.\]
This is a Bernoulli equation and can be rewritten as
\[\frac{d}{dr}(\frac{1}{s^2})=(-\frac{2}{r}+2c_2r^3)\frac{1}{s^2}-2c_2r.\]
Thus above equation turns into a linear 1-order ODE of $\frac{1}{s^2}$.  One can easily get its solution
\[\frac{1}{s^2}=e^{\int (-\frac{2}{r}+2c_2r^3)dr}(c-\int 2c_2re^{\int (\frac{2}{r}-2c_2r^3)dr}  dr).\]
Therefore, one first integral of the original equation can be chosen to be
 \[\frac{s^2}{e^{\int (\frac{2}{r}-2c_2r^3)dr}+s^2\int 2c_2re^{\int (\frac{2}{r}-2c_2r^3)dr}dr}=\frac{1}{c}.\]
 In order to get another independent first integral, from the characteristic equation, we notice that
 \[\frac{d \ln s}{-\frac{1}{r}+c_2r^3-c_2rs^2}=\frac{dr}{-1}=\frac{d \ln \phi}{\frac{1}{r}-c_2rs^2}.\]
 It implies that
 \[\frac{d \ln s - d \ln \phi}{-\frac{2}{r}+c_2r^3}=\frac{dr}{-1}.\]
 Integrating above equation yields
 \[\ln \frac{s}{\phi}-\int(\frac{2}{r}-c_2r^3)dr=c.\]
 Obviously, it can be selected as another independent first integral. Hence the general solution of the original equation can be expressed by 
   \[\Psi\big(\frac{s^2}{e^{\int (\frac{2}{r}-2c_2r^3)dr}+s^2\int 2c_2re^{\int (\frac{2}{r}-2c_2r^3)dr}dr},\ln \frac{s}{\phi}-\int(\frac{2}{r}-c_2r^3)dr\big)=0.\]
From above equality, one may solve that
\[\phi=\psi(\frac{s^2}{g(r)+s^2\int 2rc_2(r)g(r)dr})e^{-\int ( \frac{2}{r}-r^3c_2(r)) dr}s,\]
where $g(r)=e^{\int (\frac{2}{r}-2r^3c_2(r))    dr}$.
\end{proof}

\begin{theorem} \label{bth}
Suppose a Finsler metric $F=u\phi(r,s)$ is spherically symmetric in $\mathbb{R}^n$, then $F$ is Berwald metric if and only if either $F$ is Riemannian or there exists a smooth function $c_2(r)$ so that 
\[F=u\psi(\frac{s^2}{g(r)+s^2\int 2rc_2(r)g(r)dr})e^{-\int (\frac{2}{r}-r^3c_2(r))dr}s\]
where $g(r)=e^{\int (\frac{2}{r}-2r^3c_2(r))    dr}$.
\end{theorem}

\begin{proof} Firstly, let us  prove the necessity. Since $F$ is Berwald metric, it satisfies the equations (\ref{beq1}). It means that
there exist three functions $c_1(r)$, $c_2(r)$ and $c_3(r)$ such that
\[P=c_1(r)s, \quad Q=\frac{1}{2}c_2(r)s^2+c_3(r).\]
Note that 
\[P=-\frac{1}{\phi}\big(s\phi+(r^2-s^2)\phi_s\big)Q+\frac{1}{2r\phi}(s\phi_r+r\phi_s),\quad Q=\frac{1}{2r}\frac{-\phi_r+s\phi_{rs}+r\phi_{ss}}{\phi-s\phi_s+(r^2-s^2)\phi_{ss}}.\] 
Plugging $P$ and $Q$ into above equation, one has
\begin{equation*}
\left\{ \begin{array}{l}
-\frac{1}{\phi}\big(s\phi+(r^2-s^2)\phi_s\big)(\frac{1}{2}c_2(r)s^2+c_3(r))+\frac{1}{2r\phi}(s\phi_r+r\phi_s)=c_1(r)s, \\
 \frac{1}{2r}\frac{-\phi_r+s\phi_{rs}+r\phi_{ss}}{\phi-s\phi_s+(r^2-s^2)\phi_{ss}}=\frac{1}{2}c_2(r)s^2+c_3(r).   
 \end{array} \right.
          \end{equation*}
This equation system can be simplified as
\begin{equation}\label{beq2}
\left\{ \begin{array}{l}
\big((r^2-s^2)(2c_3+c_2s^2)-1\big)r\phi_s-s\phi_r+rs(2c_3+c_2s^2)\phi+2rsc_1\phi=0,\\
\big((r^2-s^2)(2c_3+c_2s^2)-1\big)r\phi_{ss}-s\phi_{rs}+\phi_r+r(2c_3+c_2s^2)(\phi-s\phi_s)=0. 
 \end{array} \right.
          \end{equation}
Differentiating the first equation of (\ref{beq2}) with respect to the variable $s$ will conclude that
\[\big((r^2-s^2)(2c_3+c_2s^2)-1\big)r\phi_{ss}-s\phi_{rs}-\phi_r+(3c_2s^2+2c_1+2c_3)r\phi\]
\[+\big(2(r^2-s^2)sc_2-(2c_3+c_2s^2)s+2c_1s\big)r\phi_s=0.\]
From above equation and the second equation of (\ref{beq2}), we can see that
\begin{equation}\label{beq3}
\big((r^2-s^2)sc_2+c_1s\big)r\phi_s-\phi_r+(c_2s^2+c_1)r\phi=0.
\end{equation}
Hence (\ref{beq2}) and (\ref{beq3}) imply that
\[\big((c_1+2c_3)s^2+1-2c_3r^2\big)\phi_s-(c_1+2c_3)s\phi=0.\]

If $(c_1+2c_3)s^2+1-2c_3r^2\neq 0$, integrating above equation concludes 
\begin{eqnarray*}
F=u\phi(r,s)&=&uc\sqrt{(c_1+2c_3)s^2+1-2c_3r^2}\\
&=&c(r)\sqrt{(c_1(r)+2c_3(r))\langle x, y\rangle^2+(1-2c_3(r)r^2)|y|^2}.
\end{eqnarray*}
Therefore, it must be Riemannian.

If $(c_1+2c_3)s^2+1-2c_3r^2=0$ which means $c_3=\frac{1}{2r^2}$ and $c_1=-\frac{1}{r^2}$,  above equation automatically holds and can tell us that the first equation of (\ref{beq2}) implies the second equation of (\ref{beq2}). In this case,  the first equation of  (\ref{beq2}) becomes  
\[ \big(\frac{s}{r}-rc_2(r)(r^2-s^2)s\big)\phi_s+\phi_r=-(\frac{1}{r}-rc_2(r)s^2)\phi.\]
By Lemma \ref{So},  the general solution of the above equation is 
\[\phi=\psi(\frac{s^2}{g(r)+s^2\int 2rc_2(r)g(r)dr})e^{-\int (\frac{2}{r}-r^3c_2(r))dr}s\]
where $g(r)=e^{\int (\frac{2}{r}-2r^3c_2(r))    dr}$.

Now let us prove the sufficiency. If $F$ is Riemmanian, it is of course Berwaldian.  If \[F=u\psi(\frac{s^2}{g(r)+s^2\int 2rc_2(r)g(r)dr})e^{-\int (\frac{2}{r}-r^3c_2(r))dr}s\]
where $c_2(r)$ is a smooth function and $g(r)=e^{\int (\frac{2}{r}-2r^3c_2(r))    dr}$, from above  calculation, we know that its spray coefficients are given by 
\[P=-\frac{s}{r^2}, \quad Q=\frac{1}{2}c_2(r)s^2+\frac{1}{2r^2}.\]
By the equation $(\ref{beq1})$, $F$ is Berwaldian. 
\end{proof}

\section{Landsberg Curvature}

The Landsberg curvature is one part of curvature forms when using Chern connection. In local coordinates, it can be defined as 
$L:=L_{\ jkl}dx^j\otimes dx^k\otimes dx^l$ where
\[L_{jkl}:=-\frac{1}{2}FF_{y^i}\frac{\partial ^3G^i}{\partial y^j\partial y^k\partial y^l}.\]
When a Finsler metric $F=u\phi(r,s)$ is spherically symmetric in $R^n$, substituting the geodesic spray coefficients into above definition and after a not so long calculation, one will obtain
\begin{eqnarray*}L_{jkl}&=&-\frac{\phi}{2}[L_1x^jx^kx^l+L_2(x^j\delta_{kl}+x^k\delta_{jl}+x^l\delta_{jk})+L_3\frac{y^j}{u}\frac{y^k}{u}\frac{y^l}{u}\\
&&+L_4(\frac{y^j}{u}\delta_{kl}+\frac{y^k}{u}\delta_{jl}+\frac{y^l}{u}\delta_{jk})+L_5(\frac{y^j}{u}x^kx^l+\frac{y^k}{u}x^jx^l+\frac{y^l}{u}x^jx^k)\\
&&+L_6(x^j\frac{y^k}{u}\frac{y^l}{u}+x^k\frac{y^j}{u}\frac{y^l}{u}+x^l\frac{y^j}{u}\frac{y^k}{u})]
\end{eqnarray*}
where 
\begin{eqnarray*}
L_1&=&3\phi_sP_{ss}+\phi P_{sss}+\big(s\phi+(r^2-s^2)\phi_s\big)Q_{sss},\\
L_2&=&-s\phi P_{ss}+\phi_s(P-sP_s)+(s\phi+(r^2-s^2)\phi_s)(Q_s-sQ_{ss}),\\
L_3&=&-s^3L_1+3sL_2,\\
L_4&=&-sL_2,\\
L_5&=&-sL_1,\\
L_6&=&s^2L_1-L_2.
\end{eqnarray*}

A Finsler metric is defined as a Landsberg type if its Landsberg curvature vanishes. Obviously, above computation result indicates that $F$ is a Landsberg metric if and only if it satisfies the following equations
\begin{equation}\label{leq1}
\left\{ \begin{array}{l}
          L_1=3\phi_sP_{ss}+\phi P_{sss}+\big(s\phi+(r^2-s^2)\phi_s\big)Q_{sss}=0\\
          \\
         L_2=-s\phi P_{ss}+\phi_s(P-sP_s)+(s\phi+(r^2-s^2)\phi_s)(Q_s-sQ_{ss})=0.
          \end{array} \right.\end{equation}
At first glance, this equation system might not have an explicit solution. However, after a careful analysis and substitution, it can be completely solved. 

\begin{lemma} \label{le1}
 If a spherically symmetric Finsler metric $F=u\phi(r,s)$ in $R^n\ (n\geq 3)$ is Landsberg type, where $u:=|y|$, $r:=|x|$ and $s:=\frac{\langle x,y\rangle}{|y|}$, then there exist the functions $c_0(r), c_1(r), c_2(r), c_3(r)$ so that its geodesic spray coefficients $G^i$ satisfy
\[G^i=uPy^i+u^2Qx^i\]
where 
\[P=c_1(r)s+c_2(r)\frac{\sqrt{r^2-s^2}}{r^2}\]
and 
\[Q=\frac{1}{2}c_0(r)s^2-\frac{c_2(r)s\sqrt{r^2-s^2}}{r^4}+c_3(r).\]
\end{lemma}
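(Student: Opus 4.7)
The plan is to show that the Landsberg system $L_1 = L_2 = 0$ from (\ref{leq1}) reduces, after elimination, to two ODEs in $s$ (with $r$ as parameter): one for $P$ whose general solution gives the displayed form, and one for $Q$ which we then integrate directly.

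First I would introduce the shorthand $A := P_{ss}$, $B := Q_{sss}$, $W := P - sP_s$, $Z := Q_s - sQ_{ss}$, noting $W_s = -sA$ and $Z_s = -sB$, so that (\ref{leq1}) becomes
\[
L_1\colon\ \phi A_s + 3\phi_s A + T B = 0, \qquad L_2\colon\ -s\phi A + \phi_s W + T Z = 0,
\]
with $T := s\phi + (r^2-s^2)\phi_s$. The central step is to differentiate $L_2$ in $s$ and add $s L_1$; the $B$ and $A_s$ terms cancel, and then substituting $Z = (s\phi A - \phi_s W)/T$ from $L_2$ (assuming the generic case $T \neq 0$) and clearing denominators, both the $A$-coefficient and the $W$-coefficient collapse to $-\rho_1$ times a common expression. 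This produces the identity
\[
\rho_1 \cdot \bigl[(r^2-s^2) P_{ss} + (P - sP_s)\bigr] = 0,
\]
where $\rho_1 = (\phi - s\phi_s)\phi_s - s\phi\phi_{ss}$ is the coefficient from Section 2. Spotting the factor $\rho_1$ inside the tangle of $\phi$-derivatives is the main computational obstacle; once recognized, the dichotomy is transparent.

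If $\rho_1 \equiv 0$, set $h := \phi\phi_s$; the equation reads $h = sh_s$, which integrates to $\phi^2 = a(r) + b(r) s^2$. Thus $F$ is Riemannian, hence Berwaldian, and the Berwald equations (\ref{beq1}) used in Theorem \ref{bth} give $P = c_1(r) s$, $Q = \tfrac{1}{2} c_0(r) s^2 + c_3(r)$, the asserted form with $c_2 \equiv 0$. In the other branch, $(r^2-s^2) P_{ss} - sP_s + P = 0$ is a second order linear ODE in $s$ with independent solutions $s$ and $\sqrt{r^2-s^2}$, so $P = c_1(r) s + \tilde c_2(r)\sqrt{r^2-s^2}$; setting $c_2(r) := r^2 \tilde c_2(r)$ yields the formula in the lemma.

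To recover $Q$, differentiating $W + (r^2-s^2) A = 0$ in $s$ gives $A_s = 3sA/(r^2-s^2)$; substituting this into $L_1 = 0$ and simplifying via $T - (r^2-s^2)\phi_s = s\phi$ produces $Q_{sss} = -3 P_{ss}/(r^2-s^2) = 3c_2/(r^2-s^2)^{5/2}$. Two antiderivatives in $s$ (a standard trigonometric substitution) give $Q_{ss}$ and then $Q_s$ up to one free function $k_1(r)$. The remaining content of $L_2 = 0$, under $W = -(r^2-s^2) A$, simplifies to $Q_s - sQ_{ss} = P_{ss}$, which forces $k_1(r) \equiv 0$; a last antiderivative in $s$ then delivers $Q = \tfrac{1}{2} c_0(r) s^2 - c_2(r) s\sqrt{r^2-s^2}/r^4 + c_3(r)$, as claimed.
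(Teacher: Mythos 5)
Your proposal is correct and follows essentially the same route as the paper: differentiating $L_2$ in $s$, combining with $sL_1$, eliminating $Q_s-sQ_{ss}$ via $L_2$, and factoring out $\rho_1$ is exactly the paper's manipulation with $\Theta=P-sP_s$ and $\eta=s\phi+(r^2-s^2)\phi_s$, leading to the same dichotomy (Riemannian versus $(r^2-s^2)P_{ss}+P-sP_s=0$) and the same ODEs for $P$ and $Q$. The only differences are cosmetic — you solve the second-order ODE for $P$ directly rather than first integrating $\Theta_s/\Theta=s/(r^2-s^2)$, and you spell out the $\rho_1\equiv 0$ case via Theorem \ref{bth}, which the paper leaves implicit.
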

\begin{proof} Let $\Theta=P-sP_s$ and $\eta=s\phi+(r^2-s^2)\phi_s$, then 
\[\Theta_s=-sP_{ss},\qquad \eta_s=\phi-s\phi_s+(r^2-s^2)\phi_{ss}.\]
Therefore, the second equation of (\ref{leq1}) implies
\begin{equation}\label{leq2}
(\phi\Theta)_s+\eta(Q_s-sQ_{ss})=0.
\end{equation}
Taking the  derivative with respect to the variable $s$ will obtain
\[(\phi\Theta)_{ss}=s\eta Q_{sss}-\eta_s(Q_s-sQ_{ss}).\]
At the same time, we have
\[(\phi\Theta)_{ss}=-\phi(P_{ss}+sP_{sss})-2s\phi_sP_{ss}+\phi_{ss}\Theta.\]
Plugging the first equation of (\ref{leq1}) into above two equations yields
\begin{eqnarray*}
\eta_s(Q_s-sQ_{ss})&=&(\phi-s\phi_s)P_{ss}-\phi_{ss}\Theta\\
&=&-\frac{\phi-s\phi_s}{s}\Theta_s-\phi_{ss}\Theta.
\end{eqnarray*}
By the equation (\ref{leq2}), one has
\[ \eta(\phi_{ss}\Theta+\frac{\phi-s\phi_s}{s}\Theta_s)-\eta_s(\phi_s\Theta+\phi\Theta_s)=0.\]
Hence,
\begin{equation}\label{leq3}
\frac{\Theta_s}{\Theta}[(\phi-s\phi_s)\eta-s\phi\eta_s]=s\phi_s\eta_s-s\phi_{ss}\eta.
\end{equation}
Clearly,
\begin{eqnarray*}
(\phi-s\phi_s)\eta-s\phi\eta_s&=&(\phi-s\phi_s)\big(s\phi+(r^2-s^2)\phi_s\big)-s\phi\big(\phi-s\phi_s+(r^2-s^2)\phi_{ss}\big)\\
&=&(r^2-s^2)\big((\phi-s\phi_s)\phi_s-s\phi\phi_{ss}\big)
\end{eqnarray*}
and
\begin{eqnarray*}
s\phi_s\eta_s-s\phi_{ss}\eta&=&s\phi_s\big(\phi-s\phi_s+(r^2-s^2)\phi_{ss}\big)-s\phi_{ss}\big(s\phi+(r^2-s^2)\phi_s\big)\\
&=&s(\phi-s\phi_s)\phi_s-s^2\phi\phi_{ss}.
\end{eqnarray*}
Now it is easy to see that the equation (\ref{leq3}) holds if and only if 
\begin{equation}\label{leq4}
(\phi-s\phi_s)\phi_s-s\phi\phi_{ss}=0
\end{equation}
or 
\begin{equation}\label{leq5}
\frac{\Theta_s}{\Theta}(r^2-s^2)=s.
\end{equation}

In the case of the equation (\ref{leq4}), obviously its solution is given by
\[\phi=\sqrt{c_1(r)s^2+2c_2(r)}\]
where $c_1$ and $c_2$ are two functions of the variable $r$. 
Thus the Finsler metric $F=u\phi$ is Riemannian.  

In the case of the equation (\ref{leq5}),  one can obtain 
\[\Theta=P-sP_s=\frac{c_2(r)}{\sqrt{r^2-s^2}}\]
where $c_2(r)$ is a function. This implies
\[P=c_1(r)s+c_2(r)\frac{\sqrt{r^2-s^2}}{r^2},\]
here $c_1(r)$ and $c_2(r)$ are two functions. Combining with the second equation of (\ref{leq1}) concludes
\begin{eqnarray*}
(Q_s-sQ_{ss})\big(s\phi+(r^2-s^2)\phi_s\big)&=&-\frac{c_2(r)s}{(\sqrt{r^2-s^2})^3}\phi-\frac{c_2(r)}{\sqrt{r^2-s^2}}\phi_s\\
&=&-\frac{c_2(r)}{(\sqrt{r^2-s^2})^3}\big(s\phi+(r^2-s^2)\phi_s\big).
\end{eqnarray*}
Since $s\phi+(r^2-s^2)\phi_2\neq0$, $Q$ has to satisfy
\[Q_s-sQ_{ss}=\frac{-c_2(r)}{(\sqrt{r^2-s^2})^3}.\]
Solving this equation will get
\[Q=\frac{1}{2}c_0(r)s^2-\frac{c_2(r)s\sqrt{r^2-s^2}}{r^4}+c_3(r).\]
This completes the proof of the lemma.
\end{proof}

Now we can give a proof of Theorem \ref{lt} by using above lemma. 
\begin{proof} [Proof of Theorem \ref{lt}]
Let us introduce $U$ and $W$ so that
\[U:=\frac{s\phi+(r^2-s^2)\phi_s}{\phi},\qquad W:=\frac{s\phi_r+r\phi_s}{\phi}.\]
First, from the definition of $U$ and $W$, one can solve $\phi_s$ and $\phi_r$:
\begin{equation}\label{leq8}
\phi_s=\frac{U-s}{r^2-s^2}\phi,\qquad\phi_r=\frac{1}{s}(W-\frac{r(U-s)}{r^2-s^2})\phi.
\end{equation}
Plugging $\phi_s$ and $\phi_s$ into $P$ and $Q$, which appear in the geodesic spray coefficients of $F$, we have
\begin{equation}\label{leq6}
\left\{ \begin{array}{l}
 P=-QU+\frac{W}{2r}\\
 \\
Q=\frac{1}{2rs}\frac{2rU-2rs-2r^2W+s(r^2-s^2)W_s+s^2W+sUW}{U^2-sU+(r^2-s^2)U_s}.   
 \end{array} \right.
          \end{equation}
At the same time, since the metric is Landsberg type, according to lemma \ref{le1}, there exist the functions $c_i(r)$ $(i=0,\dots,3)$ such that 
\[ P=c_1(r)s+c_2(r)\frac{\sqrt{r^2-s^2}}{r^2},\quad Q=\frac{1}{2}c_0(r)s^2-\frac{c_2(r)s\sqrt{r^2-s^2}}{r^4}+c_3(r).\]

If $c_2(r)=0$, then $P=c_1(r)s$ and $Q=\frac{1}{2}c_0(r)s^2+c_3(r)$ which means $F$ is Berwaldian. 

If $c_2(r)\neq0$,  substituting $P$ and $Q$ into the (\ref{leq6}) can obtain  
\begin{equation}\label{leq7}
\left\{ \begin{array}{l}
U=\frac{r^2(s+c_1r^2s+2c_2\sqrt{r^2-s^2})}{r^2+2c_2s\sqrt{r^2-s^2}+c_1r^2s^2-2c_3r^2(r^2-s^2)}\\
\\
W=2r(P+UQ).%\frac{2(2c_1c_3r^4s^3+c_1^2r^4s^3+2c_1c_2s^2r^2\sqrt{r^2-s^2}-c_1c_3r^6s+c_1r^4s+2c_2c_3r^2s^2\sqrt{r^2-s^2}+c_2(r^2-s^2)^{3/2}+c_0r^4s^3+c_0c_1r^6s^3+2c_0c_2r^4s^2\sqrt(r^2-s^2)+c_3r^4s)}{r\big(r^2+c_1r^2s^2+2c_2s\sqrt{r^2-s^2}-2c_3r^2(r^2-s^2)\big)}
\end{array}\right.
\end{equation}
From  (\ref{leq8}), one can see that $\phi$ should satisfy
\begin{equation}\label{leq9}
\left\{ \begin{array}{ll}
(\ln\phi)_s=&\frac{(c_1+2c_3)r^2s+2c_2\sqrt{r^2-s^2}}{r^2+(c_1+2c_3)r^2s^2-2c_3r^4+2c_2s\sqrt{r^2-s^2}}\\
\\
(\ln\phi)_r=&\frac{s\sqrt{r^2-s^2}(2c_0c_2r^4+4(c_1+c_3)c_2r^2-2c_2)}{r\big(r^2+(c_1+2c_3)r^2s^2-2c_3r^4+2c_2s\sqrt{r^2-s^2}\big)}\\
&+\frac{c_0c_1r^6s^2+(c_0+4c_1c_3+2c_1^2)r^4s^2-2c_1c_3r^6+c_1r^4}{r\big(r^2+(c_1+2c_3)r^2s^2-2c_3r^4+2c_2s\sqrt{r^2-s^2}\big)}.
\end{array}\right.
\end{equation}
Integrating the first equation of (\ref{leq9}) yields 
\[\phi=\exp(\int \frac{(c_1+2c_3)r^2s+2c_2\sqrt{r^2-s^2}}{r^2+(c_1+2c_3)r^2s^2-2c_3r^4+2c_2s\sqrt{r^2-s^2}}ds).\]

\end{proof}

\begin{remark} The metrics  in the second case of Theorem \ref{lt} are not regular, since along the direction of the radius, we have $s=r$ and the metrics are not $C^2$. \\
\end{remark}

If we assume the metrics to be regular, Theorem \ref{lt} leads to the following corollary.
\begin{corollary}Let $(\Omega ,F)$ be a spherically symmetric Finsler metric in $\mathbb{R}^n$ and $n\geq 3$.  If $F$ is a Landsberg metric, then it must be Berwaldian. 
\end{corollary}

\section{Riemann Curvature}
The Riemann curvature is one of the most important quantities in Finsler geometry and it is defined by
\[R^{i}_{\
j}:=2(G^{i})_{x^{j}}-y^{k}(G^{i})_{x^{k}y^{j}}+2G^{k}(G^{i})_{y^{k}y^{j}}-(G^{i})_{y^{k}}(G^{k})_{y^{j}}.\]
For any tangent plane $P$=span$\{y,u\}\subset T_{x}M$, the flag curvature $K(P,y)$ is defined by
$$K(P,y):=\frac{g_{ij}R^i_{\ k}(x,y)u^ju^k}{F(x,y)^2g_{ij}(x,y)u^iu^j-[g_{ij}(x,y)y^iu^j]^2}$$ which is a generalization of the sectional curvature in Riemannian case. It is well-known that a Finsler metric $F$ has constant flag curvature $K$ if and only if
\begin{equation}\label{feq1}R^i_{\ j}=KF^2(\delta^i_{\ j}-\frac{y^i}{F}F_{y^j}).\end{equation}
The Ricci curvature is the trace of  the Riemann curvature
$\bold{Ric}:=R^{i}_{\ i}.$ A Finsler metric $(M^m, F)$ is called Einstein metric, if there exist a function $K(x)$ so that $$\bold{Ric}=(m-1)K(x)F^2.$$

For a spherically symmetric Finsler metric $F=u\phi(r,s)$, if we note its geodesic spray coefficients as following: $G^i=G_1^i+G_2^i$
where $G_1^i=uPy^i$ and $G_2^i=u^2Qx^i$.
Then its Riemann curvature can be formulated by
\begin{eqnarray*}\
R^i_{\ j}&=&2(G_1^i)_{x^j}+2(G_2^i)_{x^j}-y^k(G_1^i)_{x^ky^j}-y^k(G_2^i)_{x^ky^j}\\
&&+2(G_1^k+G_2^k)\big((G_1^i)_{y^ky^j}+(G_2^i)_{y^ky^j}\big)-\big((G_1^i)_{y^k}+(G_2^i)_{y^k}\big)\big((G^k)_{y^j}+(G^k)_{y^j}\big).
\end{eqnarray*}
Therefore, we need to compute
\begin{eqnarray*}
(G_1^i)_{x^j}&=&\frac{u}{r}P_rx^jy^i+P_sy^iy^j,\\
(G_2^i)_{x^j}&=&u^2Q\delta^i_{\ j}+\frac{u^2}{r}Q_rx^ix^j+uQ_sx^iy^j
\end{eqnarray*}
and 
\begin{eqnarray*}
(G_1^i)_{y^k}&=&uP\delta^i_{\ k}+(\frac{P}{u}-\frac{s}{u}P_s)y^iy^k+P_sx^ky^i,\\
(G_2^i)_{y^k}&=&uQ_sx^ix^k+(2Q-sQ_s)x^iy^k.\end{eqnarray*}
Plugging above equalities into the formula of Rienmann curvature,  one can see that
$$R^i_{\ j}=u^2(R_1\delta^i_{\ j}+R_2\frac{y^i}{u}\frac{y^j}{u}+R_3x^ix^j+R_4x^i\frac{y^j}{u}+R_5x^j\frac{y^i}{u})$$
where
\begin{eqnarray*}
R_1&=&2Q-\frac{s}{r}P_r-P_s+2(r^2-s^2)P_sQ+P^2+2sPQ,\\
R_2&=&P_s-\frac{s}{r}P_r+\frac{s^2}{r}P_{rs}+sP_{ss}-2Q+sQ_s-2sPP_s-4sPQ+4s^2P_sQ-P^2\\
&&-2s(r^2-s^2)P_{ss}Q+3sPP_s+s^2PQ_{s}+(r^2-s^2)sP_sQ_s-2r^2P_sQ,\\
R_3&=&\frac{2}{r}Q_r-Q_{ss}-\frac{s}{r}Q_{rs}+2(r^2-s^2)QQ_{ss}+4Q^2-(r^2-s^2)Q_s^2-2sQQ_s,\\
R_4&=&-\frac{2s}{r}Q_r+\frac{s^2}{r}Q_{rs}+sQ_{ss}-2(r^2-s^2)sQQ_{ss}+(r^2-s^2)sQ_s^2-4sQ^2\\
&&+2s^2QQ_s,\\
R_5&=&\frac{2}{r}P_r-\frac{s}{r}P_{rs}-P_{ss}-Q_s+2PQ-2sP_sQ+2(r^2-s^2)P_{ss}Q-PP_s\\
&&-sPQ_s-(r^2-s^2)P_sQ_s.
\end{eqnarray*}

Notice the constant flag curvature equation (\ref{feq1}), we can yield the following three equations characterizing the spherically symmetric Finsler metrics of constant flag curvature.

\begin{proposition}\label{fprop1} Let $(\Omega ,F)$ be a spherically symmetric Finsler metric in $\mathbb{R}^n$. Then $(\Omega ,F)$ has constant flag curvature $K$ if and only if the following three equations hold
\begin{equation}\label{feq2}
\left\{ \begin{array}{l}
2Q-\frac{s}{r}P_r-P_s+2(r^2-s^2)P_sQ+P^2+2sPQ=K\phi^2\\
\\
\frac{1}{2r}P_r-\frac{s}{2r}P_{rs}-\frac{1}{2}P_{ss}+PQ-sP_sQ+(r^2-s^2)P_{ss}Q=0\\
\\
  \frac{2}{r}Q_r-Q_{ss}-\frac{s}{r}Q_{rs}+2(r^2-s^2)QQ_{ss}+4Q^2-(r^2-s^2)Q_s^2-2sQQ_s=0.
 \end{array} \right.
          \end{equation}
\end{proposition}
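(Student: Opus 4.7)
The plan is to compare the decomposition of $R^i_{\ j}$ computed just before the proposition against the defining relation for constant flag curvature $R^i_{\ j} = KF^2(\delta^i_{\ j} - y^iF_{y^j}/F)$, and reduce the resulting tensor equation to three independent scalar PDEs.

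First I would expand the right-hand side. A direct computation using $u=|y|$ and $s=\langle x,y\rangle/u$ gives $F_{y^j}=(\phi-s\phi_s)y^j/u+\phi_s x^j$, hence
\[KF^2\Big(\delta^i_{\ j} - \frac{y^iF_{y^j}}{F}\Big) = u^2\Big(K\phi^2\delta^i_{\ j} - K\phi(\phi-s\phi_s)\frac{y^iy^j}{u^2} - K\phi\phi_s\frac{y^ix^j}{u}\Big).\]
Matching coefficients against the decomposition of $R^i_{\ j}$ in the basis $\{\delta^i_{\ j},\ y^iy^j/u^2,\ x^ix^j,\ x^iy^j/u,\ y^ix^j/u\}$ produces five scalar equations:
\[R_1 = K\phi^2,\quad R_2 = -K\phi(\phi-s\phi_s),\quad R_3 = 0,\quad R_4 = 0,\quad R_5 = -K\phi\phi_s.\]

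Next I would reduce this system using the universal identity $R^i_{\ j}y^j \equiv 0$, a standard consequence of the $2$-homogeneity of $G^i$ in $y$ applied directly to the definition of the Riemann curvature (each of the four terms pairs off under Euler's relation). Contracting the expansion of $R^i_{\ j}$ with $y^j$ and separating the (generically independent) $y^i$ and $x^i$ parts yields the two universal polynomial identities
\[R_1 + R_2 + sR_5 \equiv 0,\qquad sR_3 + R_4 \equiv 0,\]
holding for every spherically symmetric metric. The second identity makes $R_4=0$ a consequence of $R_3=0$, while the first makes $R_2=-K\phi(\phi-s\phi_s)$ a consequence of $R_1=K\phi^2$ together with $R_5=-K\phi\phi_s$. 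Thus the five-equation system is equivalent to the three equations $R_1 = K\phi^2$, $R_3=0$, and $R_5 = -K\phi\phi_s$.

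Finally I would replace the condition $R_5 = -K\phi\phi_s$ by the middle equation of the proposition, using the first equation to eliminate $K\phi\phi_s$. Differentiating $R_1 = K\phi^2$ in $s$ gives $(R_1)_s = 2K\phi\phi_s$; substituting this back into the expression $R_5 + K\phi\phi_s$ and collecting terms should yield, after cancellations, the algebraic identity
\[R_5 + K\phi\phi_s \;=\; 3\Big(\frac{1}{2r}P_r - \frac{s}{2r}P_{rs} - \frac{1}{2}P_{ss} + PQ - sP_sQ + (r^2-s^2)P_{ss}Q\Big),\]
valid modulo the hypothesis $R_1 = K\phi^2$. Hence, under the first equation, $R_5=-K\phi\phi_s$ is equivalent to the vanishing of the bracketed expression, which is exactly the second equation of the proposition. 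The main technical obstacle is this last identity: it is a mechanical but somewhat tedious rearrangement of polynomial terms in $P$, $Q$ and their first and second partial derivatives, and no geometric input beyond the homogeneity reduction is needed to finish.
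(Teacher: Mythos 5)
Your proposal is correct and follows essentially the same route as the paper: decompose both sides in the basis $\{\delta^i_{\ j},\,y^iy^j/u^2,\,x^ix^j,\,x^iy^j/u,\,y^ix^j/u\}$, use the identities $R_2=-R_1-sR_5$ and $R_4=-sR_3$ to cut five equations down to three, and then trade $R_5=-K\phi\phi_s$ for the middle PDE via $\tfrac12(R_1)_s+R_5=3\bigl(\tfrac{1}{2r}P_r-\tfrac{s}{2r}P_{rs}-\tfrac12 P_{ss}+PQ-sP_sQ+(r^2-s^2)P_{ss}Q\bigr)$, whose factor of $3$ you correctly anticipate. The one refinement over the paper is that you derive the two reduction identities from the universal relation $R^i_{\ j}y^j\equiv 0$ (Euler homogeneity of $G^i$) rather than asserting them by inspection, which is a cleaner justification of the same step.
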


\begin{proof}As we know, $(\Omega,F)$ has constant flag curvature if and only if (\ref{feq1}) holds. Since $F$ is spherically symmetric, $F$ can be written as $F=u\phi(r,s)$. Therefore, 
\[F_{y^j}=(\phi-s\phi_s)\frac{y^j}{u}+\phi_sx^j.\]
Plugging $R^i_{\ j}$ and $F_{y^j}$ into (\ref{feq1}), one obtains 
\[R_1\delta^i_{\ j}+R_2\frac{y^i}{u}\frac{y^j}{u}+R_3x^ix^j+R_4x^i\frac{y^j}{u}+R_5x^j\frac{y^i}{u}=K\phi^2\delta^i_{\ j}-K\phi(\phi-s\phi_s)\frac{y^i}{u}\frac{y^j}{u}-K\phi\phi_sx^j\frac{y^i}{u}.\]
This is equivalent to say 
\begin{eqnarray*}
R_1=K\phi^2,\quad R_2=-K\phi(\phi-s\phi_s),\quad R_3=R_4=0,\quad R_5=-K\phi\phi_s.
\end{eqnarray*}
Notice that 
\[R_4=-sR_3\]
and
\[R_2=-R_1-sR_5.\]
Hence the constant flag curvature equation can be reduced as
\[R_1=K\phi^2,\quad R_5=-K\phi\phi_s,\quad R_3=0.\]
That means $F$ has constant flag curvature $K$ if and only if $P$ and $Q$ should satisfy
\begin{equation*}
\left\{ \begin{array}{l}
2Q-\frac{s}{r}P_r-P_s+2(r^2-s^2)P_sQ+P^2+2sPQ=K\phi^2\\
\\
\frac{2}{r}P_r-\frac{s}{r}P_{rs}-P_{ss}-Q_s+2PQ-2sP_sQ\\
+2(r^2-s^2)P_{ss}Q-PP_s-sPQ_s-(r^2-s^2)P_sQ_s=-K\phi\phi_s\\
\\
 \frac{2}{r}Q_r-Q_{ss}-\frac{s}{r}Q_{rs}+2(r^2-s^2)QQ_{ss}+4Q^2-(r^2-s^2)Q_s^2-2sQQ_s=0.
 \end{array} \right.
          \end{equation*}
\\

Furthermore, it is easy to observe that if taking one half of the derivative of the first one of above equations with respect to $s$, then adding the second equation, one can obtain the second equation of (\ref{feq2}). Thus above equations are equivalent to (\ref{feq2}) and the proposition is proved.
\end{proof}

Analogously, we have the following equation to characterize a spherically symmetric Einstein metric.
\begin{proposition}  Let $(\Omega ,F)$ be a spherically symmetric Finsler metric in $\mathbb{R}^n$. $(\Omega ,F)$ is an Einstein metric  if and only if there is a function $K=K(r)$ such that
\begin{equation}\label{feq3}
(n-1)R_1+(r^2-s^2)R_3=(n-1)K\phi^2
\end{equation}
where 
\[R_1=2Q-\frac{s}{r}P_r-P_s+2(r^2-s^2)P_sQ+P^2+2sPQ\]
and \[R_3=\frac{2}{r}Q_r-Q_{ss}-\frac{s}{r}Q_{rs}+2(r^2-s^2)QQ_{ss}+4Q^2-(r^2-s^2)Q_s^2-2sQQ_s.\]
\end{proposition}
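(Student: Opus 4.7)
The plan is to reduce the Einstein condition $\mathbf{Ric}=(n-1)K(x)F^{2}$ to a single scalar equation by tracing the already-expanded formula for $R^{i}_{\ j}$ and simplifying the result via a general Finsler identity. First, setting $i=j$ in
\[R^{i}_{\ j}=u^{2}\!\left(R_{1}\delta^{i}_{\ j}+R_{2}\tfrac{y^{i}}{u}\tfrac{y^{j}}{u}+R_{3}x^{i}x^{j}+R_{4}x^{i}\tfrac{y^{j}}{u}+R_{5}x^{j}\tfrac{y^{i}}{u}\right)\]
and using the contractions $\delta^{i}_{\ i}=n$, $y^{i}y^{i}=u^{2}$, $x^{i}x^{i}=r^{2}$, $x^{i}y^{i}=us$, I would obtain
\[\mathbf{Ric}=u^{2}\bigl[nR_{1}+R_{2}+r^{2}R_{3}+sR_{4}+sR_{5}\bigr].\]

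Next, I would invoke the general identity $R^{i}_{\ j}y^{j}=0$, which holds for every Finsler metric as a consequence of the $2$-homogeneity of the spray coefficients $G^{i}$ and Euler's theorem applied term-by-term to the definition of $R^{i}_{\ j}$. Contracting the tensorial decomposition above with $y^{j}$ and separating the independent vectors $y^{i}$ and $x^{i}$ yields the two linear relations
\[R_{2}=-R_{1}-sR_{5},\qquad R_{4}=-sR_{3},\]
already noted (and used) in the proof of Proposition \ref{fprop1}. Plugging them into the trace cancels the $sR_{5}$ term and collapses the Ricci expression to $\mathbf{Ric}=u^{2}\bigl[(n-1)R_{1}+(r^{2}-s^{2})R_{3}\bigr]$.

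Finally, the Einstein condition $\mathbf{Ric}=(n-1)K(x)F^{2}=(n-1)K(x)u^{2}\phi^{2}$ becomes, after cancelling $u^{2}$,
\[(n-1)R_{1}+(r^{2}-s^{2})R_{3}=(n-1)K(x)\phi^{2},\]
which is (\ref{feq3}); spherical symmetry of $F$ forces both sides to be invariant under all rotations of $\mathbb{R}^{n}$ fixing the origin, so $K(x)$ must in fact be a function of $r=|x|$ alone. The reverse implication is immediate by reading the same chain backwards. The one step that needs an argument is the identity $R^{i}_{\ j}y^{j}=0$, which should pose no real obstacle: it follows either from a short Euler-homogeneity calculation applied to the defining formula of $R^{i}_{\ j}$, or, equivalently, from a direct verification of the two linear relations on $R_{1},\dots,R_{5}$ from their explicit expressions in Section 5. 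Everything else is bookkeeping.
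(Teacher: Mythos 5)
Your proposal is correct and matches the paper's (very terse) proof: the paper likewise just traces the decomposition $R^{i}_{\ j}=u^{2}(R_{1}\delta^{i}_{\ j}+\cdots)$ and implicitly uses the relations $R_{2}=-R_{1}-sR_{5}$ and $R_{4}=-sR_{3}$ already recorded in the proof of Proposition \ref{fprop1} to collapse the trace to $(n-1)R_{1}+(r^{2}-s^{2})R_{3}$. Your derivation of those two relations from $R^{i}_{\ j}y^{j}=0$ via Euler homogeneity is a clean justification of what the paper simply asserts, but it is the same argument in substance.
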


\begin{proof} A Finsler metric $F$ is Einstein metric if and only if there exists a function $K(x)$ so that
\[\bold{Ric}=(n-1)K(x)F^2.\]
Since $F$ is spherically symmetric, thus $K(x)$ must be the function of the radius $r$.  Furthermore, the Riemann curvature is given by
\[R^i_{\ j}=u^2(R_1\delta^i_{\ j}+R_2\frac{y^i}{u}\frac{y^j}{u}+R_3x^ix^j+R_4x^i\frac{y^j}{u}+R_5x^j\frac{y^i}{u}).\]
Tracing above equality will obtain the result.
\end{proof}
\begin{remark} From above equation, it is possible to find a clue to verify that Schur lemma still holds for this type of Einstein metrics.
\end{remark}
\section{The explicit examples of constant flag curvature}

Now we are ready to describe the procedure on how to construct a non-projective Finsler metric with constant flag curvature. As for the projective spherically symmetric Finsler metrics, if they have constant flag curvature, the second author has given a complete classification in \cite{Zh1}.

Let $(\Omega, F) \subseteq \mathbb{R}^n$ be a spherically symmetric Finsler metric with constant flag curvature. Assume $F=u\phi(r, s)$ and one part of the geodesic spray coefficients $Q\neq 0$. Therefore, it is not projective.  By proposition \ref{fprop1}, $F$ has constant flag curvature if and only if its geodesic  spray coefficients must satisfy the following three equations
\begin{equation}\label{ege1}
\left\{ \begin{array}{l}
2Q-\frac{s}{r}P_r-P_s+2(r^2-s^2)P_sQ+P^2+2sPQ=K\phi^2\\
\\
\frac{1}{2r}P_r-\frac{s}{2r}P_{rs}-\frac{1}{2}P_{ss}+PQ-sP_sQ+(r^2-s^2)P_{ss}Q=0\\
\\
  \frac{2}{r}Q_r-Q_{ss}-\frac{s}{r}Q_{rs}+2(r^2-s^2)QQ_{ss}+4Q^2-(r^2-s^2)Q_s^2-2sQQ_s=0.
 \end{array} \right.
          \end{equation}

Since it is not easy to get all generic solutions of the third equation explicitly, one possible way is to find some specific solutions. In fact, we can follow a routine step to search the required metrics. 
\flushleft
\begin{enumerate}
\item[\textbf{Step 1}]\textbf{Find some specific solutions of the third equation of (\ref{ege1}) to determine} $\mathbf{Q.}$\\
For example, if suppose $Q=c_1(r)+c_2(r)s^2$ are the solutions of the third equation in (\ref{ege1}),  plugging it into the equation implies $$Q=c_1(r)+\frac{c_1'+2rc_1^2}{r-2r^3c_1}s^2.$$ 
Thus $Q=c_1(r)+\frac{c_1'+2rc_1^2}{r-2r^3c_1}s^2$ is the specific solutions only depending on a function of variable $r$. Furthermore, it usually does not vanish.
\item[\textbf{Step 2}]\textbf{Solve the second equation of (\ref{ege1}) to determine} $\mathbf{P.}$\\
Once $Q$ is precise, the second equation of (\ref{ege1}) can be rewritten as
\[\frac{1}{2r}(P-sP_s)_r+(P-sP_s)Q-\frac{r^2-s^2}{s}(P-sP_s)_sQ+\frac{1}{2s}(P-sP_s)_s=0.\]
In fact, this is a linear 1-order partial differential equation about the function of $P-sP_s$. When $Q$ is not so complicated, it is usually solvable. Thus one can determine  the specific $P$ from it. 
\item[\textbf{Step 3}] \textbf{Compute the flag curvature.} \\
Plugging $P$ and $Q$ in the former steps into the first equation of (\ref{ege1}), if the left hand side equals 0, it means that the metric has a vanishing flag curvature. Otherwise, one can solve $\phi$ and obtain the metric function $F=u\phi$.  By a scaling it has a positive flag curvature 1 if the left hand side of the first equation of (\ref{ege1}) is positive, and it has a negative flag curvature -1 if the left hand side of the first equation of (\ref{ege1}) is negative.
\item[\textbf{Step 4}] \textbf{Check the metric does exist.}\\
When the flag curvature of $F$ does not vanish, one can solve $F$ in step 3. However, we still need to check that the geodesic spray coefficients of $F$ equal $G^i=uPy^i+u^2Qx^i$. Since $P$ and $Q$ are determined, by substituting $\phi$ into the formula of $P$ and $Q$ and comparing them, one will draw a conclusion. 

When the flag curvature of $F$ vanishes, then by a technique used in the proof of Theorem \ref{lt}, we can solve $\phi$ to obtain the metric function from $P$ and $Q$. 
\end{enumerate}

\indent Let us demonstrate the detailed procedure to produce some new examples via above steps.
\begin{example}\label{eg1}
\emph{The Finsler metric 
\[F:=u\frac{(2r+1)^2}{(4r+1)^{\frac{3}{2}}}\exp\Big(\int_{0}^s\frac{\pm4r(r+4r^2-2s^2)-4s(1+2r)\sqrt{r\big(r+4r^2-4s^2\big)}}{\Big(r+4r^2-4s^2\Big)\Big(\pm2rs+(1+2r)\sqrt{r\big(r+4r^2-4s^2\big)}\Big)}ds\Big)\]
 defined on $\Omega=\mathbb{R}^n\setminus\{0\}$ has a vanishing flag curvature. Here $r:=|x|$, $u:=|y|$, $s:=\frac{\langle x,y\rangle}{|y|}$.}
\end{example}
\begin{proof}
Suppose $Q=c_1(r)+c_2(r)s^2$ are the solutions of the third equation in (\ref{ege1}) and plugging it into the equation implies $$Q=c_1(r)+\frac{c_1'+2rc_1^2}{r-2r^3c_1}s^2.$$
If let $c_1(r):=-\frac{1}{r}$, $Q$ can be simplified as $$Q=-\frac{r^2-s^2}{r^3}.$$

Rewrite the second equation of (\ref{ege1}) as 
\[\frac{1}{2r}(P-sP_s)_r+(P-sP_s)Q-\frac{r^2-s^2}{s}(P-sP_s)_sQ+\frac{1}{2s}(P-sP_s)_s=0.\]
When $Q=-\frac{r^2-s^2}{r^3}$, one can solve it and conclude that there must exist a function $f$ such that
\[P-sP_s=f\Big(\frac{r(r^2-s^2)}{r+4r^2-4s^2}\Big)\sqrt{\frac{r}{r+4r^2-4s^2}}.\]
The most simple case is when $f$ equals a constant $c$. This means $$P-sP_s=c\sqrt{\frac{r}{r+4r^2-4s^2}}.$$
Hence there exists a function $g(r)$ so that
$$P=g(r)s+\frac{c\sqrt{r(r+4r^2-4s^2)}}{r(1+4r)}.$$

Substituting $P$ and $Q$ into the first equation of (\ref{ege1}) implies that the flag curvature $K=0$ if and only if $g=-\frac{2}{r+4r^2}$ and $c=\pm 2$; otherwise its flag curvature $K\neq 0$.

When $g=-\frac{2}{r+4r^2}$ and $c=\pm 2$, we have 
\begin{equation*}
\left\{\begin{array}{l}
P=-\frac{2s}{r+4r^2}\pm\frac{2\sqrt{r(r+4r^2-4s^2)}}{r(1+4r)}\\
\\
Q=-\frac{r^2-s^2}{r^3}.
\end{array}\right.
\end{equation*}
According to above step 4, we must solve the Finsler metric function $F$ from $P$ and $Q$ by a technique we  have used in the previous section 4. If let 
\[U:=\frac{s\phi+(r^2-s^2)\phi_s}{\phi},\qquad W:=\frac{s\phi_r+r\phi_s}{\phi},\]
then it is easy to see
\begin{equation}\label{ege2}
\phi_s=\frac{U-s}{r^2-s^2}\phi,\qquad\phi_r=\frac{1}{s}(W-\frac{r(U-s)}{r^2-s^2})\phi.
\end{equation}
Therefore, $P$ and $Q$ can be expressed by $U$ and $W$ via
\begin{equation}\label{ege3}
\left\{ \begin{array}{l}
 P=-QU+\frac{W}{2r}\\
 \\
Q=\frac{1}{2rs}\frac{2rU-2rs-2r^2W+s(r^2-s^2)W_s+s^2W+sUW}{U^2-sU+(r^2-s^2)U_s}.   
 \end{array} \right.
          \end{equation}
          
 Plugging the formulas of $P$ and $Q$ into the equations (\ref{ege3}), we obtain
 \[U=\frac{r\big(\pm4r^2(r+4r^2-4s^2)+s(1+2r)\sqrt{r(r+4r^2-4s^2)}\mp 2rs^2\big)}{\big(r+4r^2-4s^2\big)\big(\pm2rs+(1+2r)\sqrt{r(r+4r^2-4s^2)}\big)}.\]
 By (\ref{ege2}), one can conclude 
 \begin{equation}\label{ege4}
 (\ln\phi)_s=\frac{\pm4r(r+4r^2-2s^2)-4s(1+2r)\sqrt{r\big(r+4r^2-4s^2\big)}}{\Big(r+4r^2-4s^2\Big)\Big(\pm2rs+(1+2r)\sqrt{r\big(r+4r^2-4s^2\big)}\Big)}.
 \end{equation}
 At the same time, combining (\ref{ege3}) and (\ref{ege2}), it can be deduced that
 \begin{eqnarray}\label{ege5}
 (\ln\phi)_r&=&\frac{2\sqrt{r(r+4r^2-4s^2)}(s^2+8r^2s^2+14rs^2+8r^4-2r^3-r^2)}{r(1+4r)(r+4r^2-4s^2)\big(\pm2rs+(1+2r)\sqrt{r(r+4r^2-4s^2)}\big)}\nonumber\\
 &&-\frac{\pm4(5r^3s+20r^4s-12r^2s^3+rs^3)}{r(1+4r)(r+4r^2-4s^2)\big(\pm2rs+(1+2r)\sqrt{r(r+4r^2-4s^2)}\big)}.
\end{eqnarray}
Therefore, by using Maple, it can be verified that $\phi$ satisfies
\[(\ln\phi)_{rs}=(\ln\phi)_{sr}.\]
This means there exists $\phi(r,s)$ so that it is a solution of the equations (\ref{ege4}) and (\ref{ege5}). Integrating (\ref{ege4}) will obtian
\[\phi(r,s)=\exp\Big(\int_{0}^s\frac{\pm4r(r+4r^2-2s^2)-4s(1+2r)\sqrt{r\big(r+4r^2-4s^2\big)}}{\Big(r+4r^2-4s^2\Big)\Big(\pm2rs+(1+2r)\sqrt{r\big(r+4r^2-4s^2\big)}\Big)}ds\Big)c_0(r).\]
In order to find out $c_0(r)$, firstly notice that $\phi(r,0)=c_0(r)$. Thus $$\phi(r,0)_r=c_0'(r).$$
According to (\ref{ege5}), we know the following equation of $c_0(r)$ holds
\[\frac{c_0'}{c_0}=\frac{2(2r-1)}{(2r+1)(4r+1)}.\]
Integrating above equality will yield
\[c_0(r)=\frac{(2r+1)^2}{(4r+1)^{\frac{3}{2}}}.\]
Thus the example \ref{eg1} has a vanishing flag curvature.
\end{proof}

\begin{example}\label{eg2}
\emph{The Finsler metric 
\[F^2:=u^2\Big(\frac{1}{4r+1}\pm\frac{4\sqrt{r(r+4r^2-4s^2)}s}{r(2r+1)(4r+1)^2}-\frac{4(4r^2+3r+1)}{r(2r+1)^2(4r+1)^2}s^2\Big)\]
 defined on $\Omega=\mathbb{R}^n\setminus\{0\}$ has a negative flag curvature $K=-1$. Here $r:=|x|$, $u:=|y|$, $s:=\frac{\langle x,y\rangle}{|y|}$.}
\end{example}

\begin{proof}
As in the proof of example \ref{eg1}, if let \begin{equation*}
\left\{\begin{array}{l}
P=g(r)s+\frac{c\sqrt{r(r+4r^2-4s^2)}}{r(1+4r)}\\
\\
Q=-\frac{r^2-s^2}{r^3}.
\end{array}\right.
\end{equation*}
One can easily see that $P$ and $Q$ satisfy  both the second equation and the third equation of (\ref{ege1}).  Thus step 1 and step 2 is done.

Now suppose the flag curvature of the Finsler metric is not zero, that means that $g\neq-\frac{2}{r+4r^2}$ and $c\neq\pm 2$. In order to determine $g(r)$ and $c$, let us firstly skip step 3 and directly go to step 4. 

Now we will use a similar technique in example \ref{eg1} to find the specific $g(r)$ and $c$. Let 
\[U:=\frac{s\phi+(r^2-s^2)\phi_s}{\phi},\qquad W:=\frac{s\phi_r+r\phi_s}{\phi},\]
thus
\begin{equation*}
\left\{ \begin{array}{l}
 P=-QU+\frac{W}{2r}\\
 \\
Q=\frac{1}{2rs}\frac{2rU-2rs-2r^2W+s(r^2-s^2)W_s+s^2W+sUW}{U^2-sU+(r^2-s^2)U_s}.   
 \end{array} \right.
          \end{equation*}
 Plugging $P$ and $Q$ into above equality, one can obtain
 $$U=\frac{r[A\big(4rs+s+(4r^3s+r^2s)g(r)\big)+c(8r^4+2r^3-8r^2s^2-rs^2)]}{A\big(8r^3+6r^2+r+(4r^2s^2+rs^2)g(r)-8rs^2-2s^2\big)+c(r^2s+4r^3s-4rs^3)}$$
 where $A:=\sqrt{r(r+4r^2-4s^2)}$. Therefore, the definition of $U$ implies
 \begin{eqnarray*}
 &&(\ln\phi)_s=\frac{U-s}{r^2-s^2}\\
 &&=\frac{A\big((4r^2s+rs)g(r)-8rs-2s\big)+c(8r^3+2r^2-4rs^2)}{A\big(8r^3+6r^2+r+(4r^2s^2+rs^2)g(r)-8rs^2-2s^2\big)+c(r^2s+4r^3s-4rs^3)}.
 \end{eqnarray*}
At the same time, one can also solve $W$ and obtain $(\ln\phi)_r$ from the definition of $W$:
 \begin{eqnarray*}
 &&(\ln\phi)_r=\frac{W-r(\ln\phi)_s}{s}\\
 &&=\frac{A\big(16rs^2+2c^2r^3+8c^2r^4+32r^2s^2+2s^2-8c^2r^2s^2\big)}{r(1+4r)[A\big(8r^3+6r^2+r+(4r^2s^2+rs^2)g(r)-8rs^2-2s^2\big)+c(r^2s+4r^3s-4rs^3)]}\\
 &&+\frac{A(32r^6+32r^5+10r^4-32r^4s^2-16r^3s^2+r^3-2r^2s^2)g(r)}{r(1+4r)[A\big(8r^3+6r^2+r+(4r^2s^2+rs^2)g(r)-8rs^2-2s^2\big)+c(r^2s+4r^3s-4rs^3)]}\\
 &&+\frac{A(16r^4s^2+32r^5s^2+2r^3s^2)g(r)^2}{r(1+4r)[A\big(8r^3+6r^2+r+(4r^2s^2+rs^2)g(r)-8rs^2-2s^2\big)+c(r^2s+4r^3s-4rs^3)]}\\
 &&+\frac{c\big(-8r^4s-2r^3-8r^2s^3s-2rs^3+(64r^6s+32r^5s+4r^4s-64r^4s^3-16r^3s^3)g(r)\big)}{r(1+4r)[A\big(8r^3+6r^2+r+(4r^2s^2+rs^2)g(r)-8rs^2-2s^2\big)+c(r^2s+4r^3s-4rs^3)]}.
 \end{eqnarray*}
Since $(\ln\phi)_{sr}=(\ln\phi)_{rs}$, we can compute it with the help of Maple and conclude that it holds if and only if 
 $$g(r)=\frac{2(-10r+c^2r-3)}{3r(2r+1)(4r+1)},\quad c=\pm 1\ \text{or}\ \pm 2.$$
However, for $c\neq \pm2$, $c$ has to be $\pm 1$ and $g(r)=-\frac{2(3r+1)}{r(2r+1)(4r+1)}$.

Now $P$ and $Q$ are given by
\begin{equation*}
\left\{\begin{array}{l}
P=\frac{-2(3r+1)s}{r(2r+1)(4r+1)}\pm\frac{\sqrt{r(r+4r^2-4s^2)}}{r(4r+1)}\\
\\
Q=-\frac{r^2-s^2}{r^3}
\end{array}\right.
\end{equation*}
and we can substitute $P$ and $Q$ into the first equation of (\ref{ege1}) to execute  step 3. Finally the metric $F$ can be obtained:
\[F^2=u^2\Big(\frac{1}{4r+1}\pm\frac{4\sqrt{r(r+4r^2-4s^2)}s}{r(2r+1)(4r+1)^2}-\frac{4(4r^2+3r+1)}{r(2r+1)^2(4r+1)^2}s^2\Big)\]
and the flag curvature $K$ is -1.
\end{proof}

\begin{remark} Actually, one can choose $c_1(r)=\frac{a}{r}$ and do the above process to obtain a family of Finsler metrics with constant flag curvature depending on $a$.  Since the metrics are similar with the examples, we do not go to the details and give an explicit expression here. 
\end{remark}

\begin{example}\label{eg3}
\emph{The Finsler metric 
\[F:=u\exp\Big(4r^2\int_{0}^s\frac{4\big(\pm(4r^2-2s^2+1)-(4r^2s-4s^3+s)\sqrt{\frac{1+4r^2}{1+4r^2-4s^2}}\big)ds}{(16r^4+8r^2-32r^2s^2+16s^4-8s^2+1)\sqrt{\frac{1+4r^2}{1+4r^2-4s^2}}\pm(8r^2s-8s^3+2s)}\Big)\]
 defined on $\Omega=\mathbb{R}^n$ has a vanishing flag curvature. Here $r:=|x|$, $u:=|y|$, $s:=\frac{\langle x,y\rangle}{|y|}$.}
\end{example}

\begin{proof}
We know that $Q=c_1(r)+\frac{c_1'+2rc_1^2}{r-2r^3c_1}s^2$ is a solution of the third equation of (\ref{ege1}). If assume  $c_1(r)=-2$, then 
 \begin{equation*}
\left\{\begin{array}{l}
P=h(r)s+c\sqrt{\frac{1+4r^2-4s^2}{1+4r^2}}\\
\\
Q=-2+\frac{8s^2}{1+4r^2}
\end{array}\right.
\end{equation*}
satisfy the second equation and the third equation of (\ref{ege1}).
Plugging $P$ and $Q$ into the first equation of (\ref{ege1}), one can see that the flag curvature $K = 0$ if and only if $h=0$ and $c=\pm2$.
Therefore \begin{equation*}
\left\{\begin{array}{l}
P=\pm2\sqrt{\frac{1+4r^2-4s^2}{1+4r^2}}\\
\\
Q=-2+\frac{8s^2}{1+4r^2}.
\end{array}\right.
\end{equation*}

Now let us solve the metric function $F$. By a similar argument in example (\ref{eg1}), one can calculate that
\begin{equation*}
\left\{\begin{array}{l}
(\ln\phi)_s=\frac{4\big(\pm(4r^2-2s^2+1)-(4r^2s-4s^3+s)\sqrt{\frac{1+4r^2}{1+4r^2-4s^2}}\big)}{(16r^4+8r^2-32r^2s^2+16s^4-8s^2+1)\sqrt{\frac{1+4r^2}{1+4r^2-4s^2}}\pm(8r^2s-8s^3+2s)}\\
\\
(\ln\phi)_r=\frac{8r[(16r^4+8r^2-24r^2s^2+8rs^4-6s^2+1)\sqrt{\frac{1+4r^2}{1+4r^2-4s^2}}\pm(4s^3-8r^2s-2s)]}{(16r^4+8r^2-32r^2s^2+16s^4-8s^2+1)\sqrt{\frac{1+4r^2}{1+4r^2-4s^2}}\pm(8r^2s-8s^3+2s)}.
\end{array}\right.
\end{equation*}
It is easy to check $(\ln\phi)_{sr}=(\ln\phi)_{rs}$ does hold by Maple. This means there exist $\phi$ satisfies above equations and one can solve that
\[\phi=\exp\Big(\int_{0}^s\frac{4\big(\pm(4r^2-2s^2+1)-(4r^2s-4s^3+s)\sqrt{\frac{1+4r^2}{1+4r^2-4s^2}}\big)ds}{(16r^4+8r^2-32r^2s^2+16s^4-8s^2+1)\sqrt{\frac{1+4r^2}{1+4r^2-4s^2}}\pm(8r^2s-8s^3+2s)}\Big)c_0(r)\]
where $c_0=\exp(4r^2)$.
\end{proof}

\begin{example}\label{eg4}
\emph{The Finsler metric 
\[F^2:=\frac{u^2}{(1+4r^2)^2}\big(16r^4+8r^2-16r^2s^2+1\pm4s\sqrt{(1+4r^2)(1+4r^2-4s^2)}\big)\]
 defined on $\Omega=\mathbb{R}^n$ has a negative flag curvature $K=-1$. Here $r:=|x|$, $u:=|y|$, $s:=\frac{\langle x,y\rangle}{|y|}$.}
\end{example}
\begin{proof} In the proof of example (\ref{eg3}), let  
 \begin{equation*}
\left\{\begin{array}{l}
P=h(r)s+c\sqrt{\frac{1+4r^2-4s^2}{1+4r^2}}\\
\\
Q=-2+\frac{8s^2}{1+4r^2}
\end{array}\right.
\end{equation*}
and assume that $h\neq 0$ and $c\neq\pm 2$. Under these conditions, the metric does not have a vanishing flag curvature. From the expression of $P$ and $Q$, one can solve $(\ln\phi)_{s}$ and $(\ln\phi)_{r}$ in terms of $h$ and $c$. The equation $(\ln\phi)_{sr}=(\ln\phi)_{rs}$ implies 
$$h(r)=\frac{2(c^2-4)}{3(1+4r)},\quad c=\pm 1\ \text{or}\ \pm 2.$$
By our assumption, $h=-\frac{2}{1+4r}$ and $c=\pm 1$. Plugging 
\begin{equation*}
\left\{\begin{array}{l}
P=-\frac{2s}{1+4r}\pm\sqrt{\frac{1+4r^2-4s^2}{1+4r^2}}\\
\\
Q=-2+\frac{8s^2}{1+4r^2}
\end{array}\right.
\end{equation*}
into the first equation of (\ref{ege1}) will have
$$\phi^2=\frac{1}{(1+4r^2)^2}\big(16r^4+8r^2-16r^2s^2+1\pm4s\sqrt{(1+4r^2)(1+4r^2-4s^2)}\big)$$
and the flag curvature $K=-1$.
\end{proof}

\begin{remark}
(1) Here, one can also vary $c_1=a$ to produce another similar class of Finsler metrics with constant flag curvature depending on $a$.
It is very possible to find some new examples of constant flag curvature $K=1$ via this method.

 (2) However, it seems that it is quite hard to get an explicit classification theorem on this class of Finsler metrics with constant flag curvature since even for some specific $c_1(r)$, one can not alway get an explicit formula of $P$ from the second equation of (\ref{ege1}).
\end{remark}

{\small SCHOOL OF MATHEMATICS, PEKING UNIVERSITY  }

{\small BEIJING 100871, CHINA} 

{\small E-mail address: moxh@pku.edu.cn}

\bigskip

{\small DEPARTMENT OF MATHEMATICS, EAST CHINA NORMAL UNIVERSITY }

{\small SHANGHAI 200241, CHINA}

{\small E-mail address: lfzhou@math.ecnu.edu.cn}

\end{document}